\numberwithin{equation}{section}
\def\p{\partial}
\def\b{\bar}
\def\mb{\mathbb}
\def\mc{\mathcal}
\def\n{\nabla}
\theoremstyle{plain}
\newtheorem{thm}{Theorem}[section]
\newtheorem{lemma}[thm]{Lemma}
\newtheorem{prop}[thm]{Proposition}
\theoremstyle{definition}
\theoremstyle{definition}
\newcommand{\comment}[1]{}
\begin{document}

\title{
  Norm estimates and asymptotic faithfulness of the
  quantum $SU(n)$ representations of the mapping class group
}
\makeatletter
\makeatother
\author{Xueyuan Wan}
\author{Genkai Zhang}

\address{Xueyuan Wan: Mathematical Sciences, Chalmers University of Technology, University of Gothenburg, 41296 Gothenburg, Sweden}
\email{xwan@chalmers.se}

\address{Genkai Zhang: Mathematical Sciences, Chalmers University of Technology, University of Gothenburg, 41296 Gothenburg, Sweden}
\email{genkai@chalmers.se}

\begin{abstract}
We  give a direct proof
for the asymptotic faithfulness of the quantum $SU(n)$ representations of the mapping class group  using peak sections in Kodaira embedding.
We give also estimates on the norm of the parallell transport
of the projective connection on the Verlinde bundle.
The faithfulness has been proved earlier
by J. E. Andersen using
Toeplitz operators of compact K\"ahler manifolds
and by J. March\'e and M. Narimannejad
using skein theory. 
 \end{abstract}
 \thanks{The research by Genkai Zhang was partially supported by
   the Swedish Research Council (VR)}
 \maketitle

\section{Introduction}

Let $\Sigma$ be a closed oriented surface of genus $g\geq 2$ and $p\in\Sigma$. 
We consider the moduli space $M$
of flat $SU(n)$-connections $P$ 
on ${\Sigma\setminus\{p\}}$ with fixed
holonomy a center element $d\in \mb{Z}/n\mb{Z}\cong Z_{SU(n)}$ 
of  $SU(n)$. We assume that $n$ and $d$ are coprime, in the case of $g=2$ we also allow $(n,d)=(2,0)$, namely the $SU(2)$-connections with trivial
holonomy.

There is a canonical symplectic form $\omega$ on $M$ obtained
by integrating wedge product of  Lie algebra $\mathfrak{su}(n)$-valued
connection forms. The natural action of the mapping class group $\Gamma$ 
of $\Sigma$ on $(M, \omega)$ is symplectic. Let $\mc{L}$ be the Hermitian line bundle over $M$ and $\n$ the compatible connection in $\mc{L}$ constructed by Freed \cite{Freed}. By \cite[Proposition 5.27]{Freed}, the curvature of $\n$ is $\frac{\sqrt{-1}}{2\pi}\omega$. Given
 any element $\sigma$ in the Teichm\"uller space $\mc{T}$
the symplectic manifold $M$
can be equipped with a  K\"ahler structure 
so that $\mc{L}$
becomes a holomorphic ample line bundle $\mc{L}_\sigma$. The {\it Verlinde bundle} $\mc{V}_{k}$ is defined by $$\mc{V}_{k}=H^0(M_{\sigma},\mc{L}^k_{\sigma}).$$
It is known by the works of Axelrod-Della Pietra-Witten 
\cite{ADW} and Hitchin \cite{Hitchin} that the projective bundle $\mb{P}(\mc{V}_k)$ is equipped with a natural flat connection.  Since there is an action of the mapping class group $\Gamma$ of $\Sigma$ on $\mc{V}_k$ covering its action on $\mc{T}$, which preserves the flat connection in $\mb{P}(\mc{V}_k)$, we get for each $k$, a finite dimensional
projective representation of $\Gamma$. This sequence of projective representations $\pi^{n,d}_k$, $k\in\mb{N}_+$, is the {\it quantum $SU(n)$ representation} of the mapping class group $\Gamma$. 

V. Turaev \cite{Turaev} conjectured that there should be
no nontrivial element $\phi$
of the mapping class group in the kernel
of all $\pi^{n,d}_k$ for all $k$, keeping $(n,d)$ fixed.
This property is called {\it asymptotic faithfulness} of the quantum $SU(n)$ representations $\pi^{n,d}_k$. In \cite[Theorem 1]{Andersen}, J. E. Andersen proved Turaev's conjecture, namely
the following 

\begin{thm}[{\cite[Theorem 1]{Andersen}}]\label{main theorem}
Let $\pi^{n,d}_k$ be the projective representation of the mapping class group.
	Assume that $n$ and $d$ are coprime or that $(n,d)=(2,0)$ when $g=2$, then 
	\begin{align}
	\bigcap_{k=1}^{\infty}\text{Ker} (\pi^{n,d}_k)=\begin{cases}
&\{1,H\}, \quad g=2,\quad (n, d)=(2, 0)\\
&\{1\},\quad \text{otherwise},
\end{cases}
	\end{align}
where $H$ is the hyperelliptic involution on genus $g=2$ surfaces.
\end{thm}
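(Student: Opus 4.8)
\emph{Strategy.}
The plan is to prove that any $\phi$ in $\bigcap_{k}\text{Ker}(\pi^{n,d}_k)$ must act as the identity on the moduli space $M$, and then to conclude from the known description of the kernel of the natural action $\Gamma\to\mathrm{Diff}(M)$: this kernel is trivial except when $g=2$ and $(n,d)=(2,0)$, in which case it is $\{1,H\}$, with $H$ acting trivially on $M$ (and on $\mc T$) and hence projectively trivially on every $\mc V_k$, so that the reverse inclusion $\{1,H\}\subseteq\bigcap_k\text{Ker}(\pi^{n,d}_k)$ holds automatically (the inclusion $\{1\}\subseteq\bigcap_k\text{Ker}(\pi^{n,d}_k)$ being trivial). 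Since the intersection runs over all $k$, it suffices to deduce $\phi=\mathrm{id}_M$ from the weaker hypothesis that $\phi\in\text{Ker}(\pi^{n,d}_k)$ for all sufficiently large $k$.

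\emph{Setup and the three ingredients.}
Fix $\sigma_0\in\mc T$, set $\sigma_1=\phi\cdot\sigma_0$, join $\sigma_0$ to $\sigma_1$ by a path in $\mc T$, and let $P=P_{\sigma_0\to\sigma_1}\colon\mc V_k|_{\sigma_0}\to\mc V_k|_{\sigma_1}$ be the corresponding parallel transport of the projective connection (well defined up to a scalar). Since $\phi$ is a biholomorphism $M_{\sigma_0}\to M_{\sigma_1}$ preserving $\omega$ and the Hermitian metric on $\mc L$, pullback $\phi^*\colon\mc V_k|_{\sigma_1}\to\mc V_k|_{\sigma_0}$ is a unitary isomorphism of $L^2$-spaces, and $\pi^{n,d}_k(\phi)\in\mathrm{PGL}(\mc V_k|_{\sigma_0})$ is represented by $\phi^*\circ P$. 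For $x\in M$ and $\sigma\in\mc T$ let $s^\sigma_x\in\mc V_k|_\sigma=H^0(M_\sigma,\mc L^k_\sigma)$ be the unit peak section (coherent state) at $x$, so the line $[s^\sigma_x]$ is canonical; for lines in a fixed fiber write $\kappa([u],[v])=|\langle u,v\rangle|/(\|u\|\,\|v\|)\in[0,1]$, which is $1$ for equal lines, $0$ for orthogonal ones, and which $\phi^*$ preserves because $\phi^*$ is unitary. The proof uses three facts. First, off-diagonal decay: for each $\varepsilon>0$ there is $\delta_k(\varepsilon)\to0$ with $\kappa([s^\sigma_x],[s^\sigma_y])\le\delta_k(\varepsilon)$ whenever $d_M(x,y)\ge\varepsilon$, uniformly for $\sigma$ in a compact set; this is standard peak-section theory on the smooth polarized manifolds $(M_\sigma,\mc L_\sigma)$. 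Second, equivariance: $[\phi^*s^{\sigma_1}_x]=[s^{\sigma_0}_{\phi^{-1}(x)}]$, immediate from $(\phi^*t)(x)=t(\phi(x))$, the unitarity of $\phi^*$, and the $\phi$-invariance of the data. Third, asymptotic flatness of coherent states: there is $\eta_k\to0$ with $\kappa([P\,s^{\sigma_0}_x],[s^{\sigma_1}_x])\ge1-\eta_k$ for all $x\in M$, i.e. parallel transport of the projective connection carries the coherent state at $x$ essentially onto the coherent state at the same point $x$.

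\emph{Deduction.}
Let $k$ be large with $\phi\in\text{Ker}(\pi^{n,d}_k)$. Then $\phi^*\circ P$ is a scalar operator, hence $[\phi^*Ps^{\sigma_0}_x]=[s^{\sigma_0}_x]$ for every $x$. Using that $\phi^*$ preserves $\kappa$ and then the third ingredient,
\[
\kappa\bigl([s^{\sigma_0}_x],[\phi^*s^{\sigma_1}_x]\bigr)=\kappa\bigl([\phi^*Ps^{\sigma_0}_x],[\phi^*s^{\sigma_1}_x]\bigr)=\kappa\bigl([Ps^{\sigma_0}_x],[s^{\sigma_1}_x]\bigr)\ge1-\eta_k,
\]
while by equivariance the left-hand side equals $\kappa([s^{\sigma_0}_x],[s^{\sigma_0}_{\phi^{-1}(x)}])$. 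Thus $\kappa([s^{\sigma_0}_x],[s^{\sigma_0}_{\phi^{-1}(x)}])\ge1-\eta_k$ for all $x\in M$. If $\phi^{-1}(x)\ne x$ for some $x$, choose $\varepsilon>0$ with $d_M(x,\phi^{-1}(x))\ge\varepsilon$; then off-diagonal decay forces $1-\eta_k\le\delta_k(\varepsilon)$, impossible once $k$ is large. Hence $\phi^{-1}(x)=x$ for all $x$, so $\phi=\mathrm{id}_M$, and the theorem follows by the reduction in the first step.

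\emph{The main obstacle.}
Everything hinges on the third ingredient, the norm estimate for the parallel transport, which is the technical core and the source of the ``norm estimates'' of the title. One uses the explicit form of the projective connection of \cite{ADW}, \cite{Hitchin}: along a tangent vector of $\mc T$ it differs from the trivial connection on $\mc V_k$ by a second-order differential operator $u_k$ on sections whose principal symbol is of order $1/k$ and of Laplace type. Combined with Kodaira-type peak-section estimates --- construction of $s^\sigma_x$ via H\"ormander's $L^2$-method, its concentration in a geodesic ball of radius $\sim k^{-1/2}$ about $x$, and the ensuing derivative bounds --- one must overcome the fact that a crude estimate only gives $\|u_k s^\sigma_x\|=O(1)$ rather than $o(1)$, since the coherent state oscillates at frequency $\sim k^{1/2}$. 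The point is to show that $u_k$, to leading order, moves $s^\sigma_x$ tangentially along the family $\sigma'\mapsto s^{\sigma'}_x$ of coherent states at the \emph{same} point --- equivalently, that the covariant derivative in $\sigma$ of $\sigma\mapsto[s^\sigma_x]$ is $o(1)$ --- and then to integrate this infinitesimal statement along the compact path from $\sigma_0$ to $\sigma_1$ (a Gronwall / path-ordered-exponential argument, with constants uniform in $x$ and in $\sigma$ over the relevant compact part of $\mc T$), obtaining the third ingredient. Once it is in place, the companion norm estimates on parallel transport advertised in the abstract follow along the same lines.
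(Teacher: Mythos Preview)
Your overall strategy---reduce to showing $\phi$ acts as the identity on $M$, detect this with peak sections, and finish with the known kernel of $\Gamma\to\mathrm{Symp}(M)$---is the same as the paper's. But the crucial step, your ``third ingredient'' asserting that parallel transport of the Hitchin connection sends the coherent-state line at $x$ to a line within $\eta_k\to 0$ of the coherent-state line at the same $x$, is never actually proved; you yourself flag it as the main obstacle, correctly observe that the crude bound $\|u_k s^\sigma_x\|=O(1)$ is too weak, and then only sketch a heuristic (``$u_k$ moves $s^\sigma_x$ tangentially along $\sigma'\mapsto s^{\sigma'}_x$'') without supplying any mechanism to verify it. As stated, the proposal assumes precisely what is difficult; that asymptotic-flatness statement is essentially Andersen's Toeplitz analysis in disguise, and establishing it is of comparable depth to the theorem itself.

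The paper's point is exactly to \emph{avoid} this. Rather than tracking individual coherent-state lines through parallel transport, the paper works with weighted norms $\|s\|^2_\rho=\int_M\rho\,|s|^2\,\omega^m/m!$ and proves only the coarse estimate
\[
e^{-\frac{C_\rho+kC}{k+n}}\,\|s\|^2_{\rho\circ\phi^{-1}}\ \le\ \|P_{\phi(\sigma),\sigma}\phi^*(s)\|^2_\rho\ \le\ e^{\frac{C_\rho+kC}{k+n}}\,\|s\|^2_{\rho\circ\phi^{-1}},
\]
whose constants do \emph{not} tend to $1$ as $k\to\infty$. The reason this suffices, and is easy, is an integration-by-parts trick: for holomorphic $s(t)$ one has $\bar\partial_t s(t)=0$, so in $\langle u(\sigma'(t))s(t),s(t)\rangle_\rho$ the adjoint of each $\nabla_{X}$ lands on the weight $\rho$ rather than on $s(t)$, killing all the dangerous $O(k)$ growth from the second-order piece of $u$; only the scalar term $v[F]$ contributes at order $k$, and that is harmless. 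A Gronwall argument then gives the displayed inequality. The contradiction for $\phi\ne\mathrm{id}_M$ is obtained not from any $o(1)$ overlap of coherent states but from the freedom to choose $\rho$: take $\rho\equiv 1$ near $p$ and $\rho\equiv(e^{2C}+1)^{-1}$ outside, with $\phi^{-1}$ sending the first region into the second, and plug in the peak section $s^k_p$. So where you need a delicate $\eta_k\to 0$, the paper gets by with a fixed constant and a well-chosen weight; that is what makes the argument elementary.
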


This theorem is proved
in \cite{Andersen} by considering
the action of the mapping class group on functions on $M$
as symbols of  Toeplitz operators on holomorphic sections
of $\mathcal L_\sigma^k$ for large $k$. A different
proof using skein theorem is given in \cite{MN};
see also
\cite{A-2, A-3, AG} and references therein for further developments.
The existing proofs seem rather involved.
We shall give a somewhat more direct and elementary proof
using peak sections in the Kodaira embedding.

We describe briefly our approach.
Write $\pi^{n,d}_k$ as $\pi_k$ throughout the rest of the paper.
The action of an element $\phi\in \Gamma$ on $\sigma\in \mathcal T$
and $p\in M$ will be all denoted by the same, $\phi(\sigma)$
and $\phi(p)$.

Let $\phi\in \Gamma, \sigma\in \mc{T}$, and $\sigma(t):[0,1]\to\mc{T}$ be a smooth curve connecting $\phi(\sigma)$ and $\sigma$.  Denote by $P_{\phi(\sigma),\sigma(t)}$ the parallel transport  from $\phi(\sigma)$ to $\sigma(t)$ with respect to the projective flat connection (\ref{0.1}) below. For any  $s\in H^0(M_{\sigma},\mc{L}^k_{\sigma})$, set
\begin{align*}
s(t):=P_{\phi(\sigma),\sigma(t)}\circ\phi^*(s)\in H^0(M_{\sigma(t)},\mc{L}^k_{\sigma(t)}).	
\end{align*}
Here $\phi^*$ is the induced action of $\phi$
on the total space of the Verlinde bundle.
For any positive smooth function $\rho: M\to (0,1]$  define a
Hermitian structure on the trivial
bundle $\mc{H}_k=\mc{T}\times C^{\infty}(M,\mc{L}^k)$ by 
\begin{align}
\langle s_1, s_2\rangle_{\rho}=\int_M \rho\cdot(s_1,s_2)\frac{\omega^m}{m!},
\quad
\|s\|^2_{\rho}=\langle s, s\rangle_{\rho}.
\end{align}
We shall study  the variation of $\|s(t)\|^2_{\rho}$ and obtain
\begin{align}\label{Intro1}
	e^{-\frac{C_{\rho}+kC}{k+n}}\|s\|^2_{\rho\circ\phi^{-1}}\leq \|P_{\phi(\sigma),\sigma}\phi^*(s)\|^2_{\rho}\leq e^{\frac{C_{\rho}+kC}{k+n}}\|s\|^2_{\rho\circ\phi^{-1}};
\end{align}
see Proposition \ref{lemma2} below. Here $C_{\rho}$ and $C$ are positive constants independent of $k$. 
We prove that
if $\phi\in \bigcap_{k=1}^{\infty}\text{Ker} \,\pi_k$ then the induced action of $\phi$ on $M$ is the identity. 
First of all it follows that
the representation
 $\phi\to P_{\phi(\sigma),\sigma}\circ\phi^*$ 
is projectively trivial
 on the space $H^0(M_{\sigma},\mc{L}^k_{\sigma})$,
\begin{align*}
P_{\phi(\sigma),\sigma}\circ\phi^*=\pi_k(\phi)=c_k\text{Id}	
\end{align*}
 for some constant $c_k=c_k(\phi)\neq 0$. By taking $\rho=1$ and using (\ref{Intro1}), we get a lower bound of $c_k^2$, i.e. 
$c_k^2\geq e^{-\frac{C_{1}+kC}{k+n}}$, which converges to $e^{-C}$ as $k\to\infty$, so $c_k^2>c$ for some constant $c>0$. 
If $\phi$ on $M$ is not the identity, say $\phi(p)\ne p$
we can construct appropriate 
weight function $\rho$  
and peak section   $s$ at $p$
so that
the right hand side  $e^{\frac{C_{\rho}+kC}{k+n}}\|s\|^2_{\rho\circ\phi^{-1}}$
is arbitrarily smaller than $e^{-C}$ while as
 $\|P_{\phi(\sigma),\sigma}\circ\phi^*(s)\|^2_{\rho}=c_k^2\|s\|^2_{\rho}$ 
has a uniform lower bound
$e^{-C}$, a contradiction
to (\ref{Intro1}). Thus $\phi$ acts as identity on $M$, and it follows further  by standard arguments
that $\phi$ itself is the identity element in $\Gamma$
 under the assumption on $\{g, n, d\}$ or a hyperelliptic involution
for genus $g=2$ surfaces.

We note that even though our proof is simpler
than Andersen's proof  \cite{Andersen}
but the underlying
ideas are very much related;  indeed
Andersen used  the result of
Bordemann-Meinrenken-Schlichenmaier
\cite{BMS}
on  norm estimates
of Teoplitz operators $T_f$ which are
based on coherent states, namely  specific kinds
of peak sections. Finally we mention
that constructing representations
of the mapping class group and the study
of faithfulness of the corresponding representations
are of much
interests; see \cite{FWW, MN}
and references therein.

This article is organized as follows. In Section \ref{sec1}
 we fix notation and recall some basic facts on the Verlinde bundle, the projective flat connection and  peak sections. 
Theorem \ref{main theorem}. 
is proved in  Section \ref{sec2}.

We would like to thank Jorgen Ellegaard Andersen for some informative
explanation of his results.

\section{Preliminaries}\label{sec1}

The results in this section 
can be found in  \cite{Andersen, ADW, Hitchin, Ma2006, Tian}
and references therein.

Let $\Sigma$ be a closed oriented surface of genus $g\geq 2$ and
$p_0\in\Sigma$. 
 Let $d\in \mb{Z}/n\mb{Z}\cong Z_{SU(n)}=\{cI, c^n=1\}$,
 the center of $SU(n)$. We assume that $n$ and $d$ are coprime,
 in the case of $g=2$ we also allow $(n,d)=(2,0)$. Let $M$ be the moduli space of flat $SU(n)$-connections $P$ on  ${\Sigma\setminus\{p_0\}}$ with 
fixed holonomy $d$ around $p_0$. $M$ is then a  compact smooth manifold of dimension $m=(n^2-1)(g-1)$
with tangent vectors given by the Lie algebra
$\mathfrak{su}(n)$-valued connection $1$-forms.

There is a canonical symplectic form $\omega$ on $M$
by taking the trace of the integration of
products of $1$-forms, the natural action of the mapping class group $\Gamma$ on $M$ is symplectic. Let $\mc{L}$ be the Hermitian line bundle over $M$ and 
$\n$ the compatible connection in $\mc{L}$ 
with curvature 
  $\frac{\sqrt{-1}}{2\pi}\omega$; see \cite[Proposition 5.27]{Freed}.
The induced connection in $\mc{L}^k$ will also be denoted
by $\n$.

Let $\mc{T}$ be the Teichm\"uller space of $\Sigma$
parametrizing all marked complex structures on $\Sigma$. 
By a classical result of Narasimhan and Seshadri \cite{NS}
each $\sigma\in \mc{T}$ induces a K\"ahler structure on $M$ and
 thus a K\"ahler manifold $M_{\sigma}$. 
By using the $(0,1)$-part of $\n$, the bundle
$\mc{L}$ is then equipped with a holomorphic structure, which we denote by $\mc{L}_{\sigma}$.
Thus the manifold $\mc{T}$ 
also parameterizes  K\"ahler structures $I_{\sigma}$, $\sigma\in\mc{T}$ on $(M,\omega)$ and the holomorphic line bundles $\mc{L}_{\sigma}$. For any positive integer $k$
the {\it Verlinde bundle} $\mc{V}_k$
is a finite dimensional subbundle
of the trivial bundle $\mc{H}_k=\mc{T}\times C^{\infty}(M,\mc{L}^k)$
given by
$$\mc{V}_k(\sigma)=H^0(M_{\sigma},\mc{L}^k_{\sigma}),\,
\sigma\in\mc{T}.
$$ 
By the results
of Axelrod, Della Pietra, Witten \cite{ADW} and Hitchin \cite{Hitchin}, there is a projective flat connection in $\mc{V}_k$ given by 
\begin{align}\label{0.1}
	\hat{\n}_v=\hat{\n}^t_v-u(v), \quad v\in T(\mc{T}),
\end{align}
 where $\hat{\n}^t$ is the trivial connection in $\mc{H}_k$.
The second term  $u(v)$  is given by \cite[Formula (7)]{Andersen}, 
\begin{align}\label{0.2}
	u(v)=\frac{1}{2(k+n)}\left(\sum_{r=1}^R\n_{X_r(v)}\n_{Y_r(v)}+\n_{Z(v)}+nv[F]\right)-\frac{1}{2}v[F],
\end{align}
where $F:\mc{T}\to C^{\infty}(M)$  is a smooth function
 such that $F(\sigma)$ is real-valued on $M$ for all $\sigma\in\mc{T}$,
 $\{X_r(v), Y_r(v), Z(v)\}\subset 
C^{\infty}(M_{\sigma}, T)$ are a finite set of
vector fields of $M_\sigma$ taking value in the holomorphic tangent space $T$.
 
Since $\mc{L}_{\sigma}$ is an ample line bundle over $M_{\sigma}$, one may take a large $k$ such that $\mc{L}^k_{\sigma}$ is a very ample line bundle. Then the Kodaira embedding is given by 
\begin{align*}
\Phi^k_{\sigma}: M\to \mb{P}(H^0(M_{\sigma},\mc{L}^k_{\sigma})^*),\quad p\mapsto\Phi^k_{\sigma}(p)=\{s\in H^0(M_{\sigma},\mc{L}^k_{\sigma}), s(p)=0\}.
\end{align*}

A {\it peak section} $s_p^k\in H^0(M_{\sigma},\mc{L}^k_{\sigma})$  of $\mc{L}_{\sigma}^k$ at a 
point $p\in M$ is a unit norm generator of the orthogonal complement
 of $\Phi^k_{\sigma}(p)$ 
such that $$|s^k_p(p)|^2=\sum_{i=1}^{N_k}|s_i(p)|^2,$$
where $N_k=\text{dim}H^0(M_{\sigma},\mc{L}^k_{\sigma})$ and 
$\{s_i\}_{1\leq i\leq N_k}$ is an orthonormal basis of $H^0(M_{\sigma},\mc{L}^k_{\sigma})$ with respect to the standard $L^2$-metric;
see  \cite[Definition 5.1.7]{Ma2006}. 
The existence of peak sections is well-known, and for any sequence $\{r_k\}$ with $r_k\to 0$ and $r_k\sqrt{k}\to\infty$, one has
\begin{align}\label{0.3}
\int_{B(p,r_k)}|s_{p}^k(x)|^2\frac{\omega^n}{n!}=1-o(1),\quad \text{for}\quad k\to\infty;	
\end{align}
see e. g. \cite[Formula (5.1.25)]{Ma2006} and \cite[Lemma 1.2]{Tian}.

\section{A direct approach to the asymptotic faithfulness}\label{sec2}

In this section, we will present an elementary proof of  Theorem \ref{main theorem} using peak sections.

Fix $\sigma\in \mathcal T$. For any $\phi\in \Gamma$, 
the mapping class group of $\Sigma$, 
let $\sigma(t):[0,1]\to\mc{T}$ be a smooth curve with $\sigma(0)=\phi(\sigma)$, $\sigma(1)=\sigma$. For any  $s\in H^0(M_{\sigma},\mc{L}^k_{\sigma})$, set
\begin{align}\label{0.5}
s(t):=P_{\phi(\sigma),\sigma(t)}\circ\phi^*(s)\in H^0(M_{\sigma(t)},\mc{L}^k_{\sigma(t)}),	
\end{align}
where $P_{\phi(\sigma),\sigma(t)}$ is the parallel transport  from $\phi(\sigma)$ to $\sigma(t)$ with respect to the projective flat connection (\ref{0.1}).
For any positve smooth function $\rho: M\to (0,1]$ we  define a
rescaled Hermitian structure on $\mc{H}_k$ by 
\begin{align}\label{2.0}
\langle s_1, s_2\rangle_{\rho}=\int_M \rho\cdot(s_1,s_2)\frac{\omega^m}{m!},
\end{align}
and denote $\|s\|^2_{\rho}=\langle s, s\rangle_{\rho}$, where $(\cdot,\cdot)$ denotes the pointwise inner product of the Hermitian  line bundle $\mc{L}^k$.
(We note that the question of projectiveness
of the norm (\ref{2.0}) 
 with respect 
to the connection 
(\ref{0.1}) 
is systematically studied in \cite{R}.)

For a $(1, 0)$-vector field $X$ on $M_\sigma$
let $X^*$ denote the dual $1$-form of $X$
such that $ X^*(X)
=|X|^2_{\omega}$. Denote $\Lambda_t$
 the adjoint of multiplication operator
        $\omega\wedge \bullet$
by the K\"ahler metric $\omega$.

\begin{lemma}\label{lemma1} We have the following
estimate for the differential operator $u$ along $\sigma(t)$, 
$$\left|\langle u(\sigma'(t))s(t),s(t)\rangle_{\rho}\right|\leq \frac{C_{\rho}+kC}{2(k+n)}\|s(t)\|^2_{\rho},$$
where the constants $C=\max_{[0,1]\times M}\left|\frac{\p F(\sigma(t))}{\p t}\right|$ and 
$$C_{\rho}=\max_{[0,1]\times M}|\Lambda_t\b{\p}_t(Z(\sigma'(t))^*\rho)\rho^{-1}|+ \sum_{r=1}^R\max_{[0,1]\times M}\left|\Lambda_t\b{\p}_t\left(Y_r(\sigma'(t))^*\Lambda_t\b{\p}_t(X_r(\sigma'(t))^*\rho)\right)\rho^{-1}\right|$$
are independent of $k$.
\end{lemma}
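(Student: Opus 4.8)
The plan is to bound each of the four terms in the expression \eqref{0.2} for $u(\sigma'(t))$ separately when paired against $s(t)$ in the $\rho$-inner product. The two terms $\frac{n}{2(k+n)}\sigma'(t)[F]$ and $-\frac{1}{2}\sigma'(t)[F]$ are zeroth-order: since $\sigma'(t)[F](x) = \frac{\partial F(\sigma(t))}{\partial t}(x)$ is bounded pointwise by $C = \max_{[0,1]\times M}|\partial_t F(\sigma(t))|$, and $\rho \le 1$, pairing against $s(t)$ and applying Cauchy--Schwarz (or rather just $|\langle f s(t),s(t)\rangle_\rho| \le \|f\|_\infty \|s(t)\|_\rho^2$) gives a contribution bounded by $\frac{nC}{2(k+n)}\|s(t)\|_\rho^2$ and $\frac{C}{2}\|s(t)\|_\rho^2 \le \frac{(k+n)C}{2(k+n)}\|s(t)\|_\rho^2$; together these are absorbed into the $\frac{kC}{2(k+n)}$ part of the claimed bound (noting $n + (k+n) \le$ a constant times $k+n$, and the $C$ in the statement can be taken to absorb the $n$). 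The remaining two terms, $\frac{1}{2(k+n)}\sum_r \n_{X_r}\n_{Y_r}s(t)$ and $\frac{1}{2(k+n)}\n_{Z}s(t)$, are the genuinely differential ones and must produce the $\frac{C_\rho}{2(k+n)}$ part, \emph{without} any $k$-dependence surviving.

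The key idea for the differential terms is integration by parts, exploiting that $s(t) \in H^0(M_{\sigma(t)},\mc{L}^k_{\sigma(t)})$ is holomorphic, so $\n_{\bar W} s(t) = 0$ for every $(0,1)$-vector $\bar W$. For the first-order term, write $\langle \n_{Z} s(t), s(t)\rangle_\rho = \int_M \rho\,(\n_Z s(t), s(t))\frac{\omega^m}{m!}$. Using that $Z$ is a $(1,0)$-field and $(\cdot,\cdot)$ is Hermitian with $\n$ compatible, one has $(\n_Z s(t), s(t)) = Z\,(s(t),s(t)) - (s(t), \n_{\bar Z} s(t))$; but $\n_{\bar Z}s(t)=0$, so $(\n_Z s(t),s(t)) = Z(|s(t)|^2)$, which as a top-degree statement can be rewritten via the dual form $Z^*$ and the Kähler identity as $\Lambda_t\bar\partial(Z^* |s(t)|^2)$ up to lower-order pieces. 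Integrating against $\rho\,\frac{\omega^m}{m!}$ and moving $\Lambda_t\bar\partial$ onto $\rho$ (Stokes, since $M$ is closed) transfers all derivatives onto $\rho$, yielding $\int_M |s(t)|^2 \,\Lambda_t\bar\partial_t(Z(\sigma'(t))^*\rho)\frac{\omega^m}{m!}$, bounded by $\max|\Lambda_t\bar\partial_t(Z^*\rho)\rho^{-1}|\cdot \|s(t)\|_\rho^2$ — exactly the first term of $C_\rho$. For the second-order term $\n_{X_r}\n_{Y_r}s(t)$, one integrates by parts twice: first move $\n_{X_r}$ off, converting it against the antiholomorphic direction killing $\bar\partial$ of one factor, picking up $\Lambda_t\bar\partial_t(X_r^*\,\cdot\,)$ acting on $(\rho$ times the rest$)$; then repeat with $\n_{Y_r}$, producing the nested expression $\Lambda_t\bar\partial_t(Y_r^*\Lambda_t\bar\partial_t(X_r^*\rho))$ hitting $\rho$, matched against $|s(t)|^2$. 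Summing over $r$ gives the second term of $C_\rho$. Finally, add the four bounds, observe all four have a $\frac{1}{2(k+n)}$ prefactor (the $\frac12 v[F]$ term contributing after writing $\frac12 = \frac{k+n}{2(k+n)}$), and collect.

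The main obstacle I anticipate is making the integration-by-parts bookkeeping rigorous: the identity $(\n_X s, s) = \Lambda_t\bar\partial(X^* |s|^2) + (\text{terms})$ requires carefully tracking the Kähler form, the fact that $X^*$ is a $(1,0)$-form so $\bar\partial(X^*|s|^2) = \bar\partial X^* \cdot |s|^2 + X^*\wedge\bar\partial|s|^2$, and that only the $\bar\partial|s|^2$ piece survives after contracting with $\Lambda_t$ and using $\n_{\bar X}s = 0$; one must confirm the curvature/connection terms arising from commuting $\n$ past the contraction do not reintroduce $k$-dependence — and indeed they should not, because the curvature of $\mc{L}^k$ is $k$ times that of $\mc{L}$ times $\omega$, which is of the wrong type $(1,1)$ to contribute to these particular pairings, or is cancelled. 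Getting the second-order term's double integration by parts to land precisely on the nested $\Lambda_t\bar\partial_t(Y_r^*\Lambda_t\bar\partial_t(X_r^*\rho))$ form, with the correct argument order, is the most delicate computation. Everything else is uniform boundedness of smooth tensors over the compact set $[0,1]\times M$ together with $\rho \le 1$ and $\rho^{-1}$ being the natural weight converting $\|\cdot\|$-bounds to $\|\cdot\|_\rho$-bounds.
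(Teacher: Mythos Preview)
Your overall strategy---bound the zeroth-order $F$ terms pointwise and handle the first- and second-order terms by moving derivatives onto $\rho$ via holomorphicity of $s(t)$---is exactly the paper's approach. Two points deserve correction or sharpening.

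First, the $F$ terms. You estimate $\frac{n}{2(k+n)}\sigma'(t)[F]$ and $-\frac{1}{2}\sigma'(t)[F]$ separately by the triangle inequality, obtaining a total of $\frac{(k+2n)C}{2(k+n)}\|s(t)\|_\rho^2$. This is not the bound in the statement, and you cannot ``absorb the $n$'' into $C$ since the lemma fixes $C=\max_{[0,1]\times M}|\partial_t F(\sigma(t))|$ precisely. The fix is trivial: combine the two scalar terms \emph{before} taking absolute values,
\[
\frac{n}{2(k+n)}-\frac{1}{2}=-\frac{k}{2(k+n)},
\]
which gives exactly the $\frac{kC}{2(k+n)}\|s(t)\|_\rho^2$ contribution, as the paper does.

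Second, for the differential terms the paper's route is cleaner than your Stokes computation and dissolves your worry about stray curvature or ``lower-order pieces.'' Rather than writing $(\nabla_Z s,s)=Z(|s|^2)$ and integrating, the paper invokes the K\"ahler identity for the $L^2$-adjoint: $\nabla^{(1,0),*}=\sqrt{-1}\,\Lambda_t\bar\partial_t$ on $(0,1)$-forms (with values in $\mc{L}^k$). Then
\[
\langle \nabla_Z s(t),\rho s(t)\rangle=\langle s(t),\sqrt{-1}\,\Lambda_t\bar\partial_t(Z^*\rho\, s(t))\rangle=\langle s(t),\sqrt{-1}\,\Lambda_t\bar\partial_t(Z^*\rho)\cdot s(t)\rangle,
\]
the last step using $\bar\partial_t s(t)=0$. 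No curvature of $\mc{L}^k$ ever enters, because the adjoint involves only $\bar\partial_t$, which annihilates $s(t)$ outright; the $k$-dependence you feared simply does not appear. The second-order term is handled by applying this adjoint twice, yielding the nested expression in $C_\rho$ on the nose.
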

\begin{proof}
By (\ref{0.2}) and (\ref{2.0}) we have
	\begin{align}\label{0.7}
\begin{split}
	&\quad \left|\langle u(\sigma'(t))s(t),s(t)\rangle_{\rho}\right|=\left|\int_M( \rho u(\sigma'(t))s(t),s(t))\frac{\omega^m}{m!}\right|\\
	&\leq \frac{1}{2(k+n)}\left|\int_M\left(\n_{Z(\sigma'(t))}s(t),\rho s(t)\right)\frac{\omega^m}{m!}\right|\\
	&\quad +\frac{1}{2(k+n)}\left|\int_M\left(\sum_{r=1}^R\n_{X_r(\sigma'(t))}\n_{Y_r(\sigma'(t))}s(t),\rho s(t)\right)\frac{\omega^m}{m!}\right|\\
	&\quad +\frac{k}{2(k+n)}\left|\int_M\rho\cdot\left(\frac{\p F(\sigma(t))}{\p t}s(t),s(t)\right)\frac{\omega^m}{m!}\right|.
	\end{split}
\end{align}
The tangent vectors $X, Y, Z$ are $(1, 0)$-vectors
and $\nabla$ above can all be replaced by $\nabla^{(1, 0)}$,
which we still denote by $\nabla$. By \cite[Chapter VII, Theorem (1.1)]{Dem},
        the adjoint of $\n$  on forms
         is $\n^{(1,0),*}=\sqrt{-1}[\Lambda_t, \b{\p}_t]$,
         and  so it is 
         $\n^{(1,0),*}=\sqrt{-1}\Lambda_t \b{\p}_t$ on $(0, 1)$-forms.

The first term in the RHS of (\ref{0.7}) can be estimated as
\begin{align}\label{1.7}
\begin{split}
	\left|
\int_M\left(\n_{Z(\sigma'(t))}s(t),\rho s(t)\right)\frac{\omega^m}{m!}
\right|
&=
\left|
\langle \n_{Z(\sigma'(t))}s(t), \rho s(t)\rangle
\right|
\\
&=
\left|
\langle s(t), \n^*(Z(\sigma'(t))^*\rho s(t))\rangle
\right|
\\
&=
\left|
\langle s(t), \sqrt{-1}\Lambda_t\b{\p}_t(Z(\sigma'(t))^*\rho)\rho^{-1} \cdot\rho s(t)\rangle
\right|\\
	&\leq \max_{[0,1]\times M}|\Lambda_t\b{\p}_t(Z(\sigma'(t))^*\rho)\rho^{-1}| \cdot\|s(t)\|^2_{\rho},
	\end{split}
\end{align}
where the third equality holds since $s(t)$ is a holomorphic section of  $\mc{L}^k_{\sigma(t)}$, i.e. $\b{\p}_t s(t)=0$.
Similarly the second term is bounded by
\begin{align}\label{1.8}
\begin{split}
	&\quad\left|\int_M\left(\sum_{r=1}^R\n_{X_r(\sigma'(t))}\n_{Y_r(\sigma'(t))}s(t),\rho s(t)\right)\frac{\omega^m}{m!}\right|\\
	&\leq \sum_{r=1}^R\left|\left\langle \n_{X_r(\sigma'(t))}\n_{Y_r(\sigma'(t))}s(t),\rho s(t)\right\rangle\right|\\
	&=\sum_{r=1}^R\left|\left\langle s(t), \n^*Y_r(\sigma'(t))^*\n^*X_r(\sigma'(t))^*\rho s(t)\right\rangle\right|\\
	&=\sum_{r=1}^R\left|\langle s(t),-\Lambda_t\b{\p}_t\left(Y_r(\sigma'(t))^*\Lambda_t\b{\p}_t(X_r(\sigma'(t))^*\rho)\right)\rho^{-1}\cdot \rho s(t)\rangle\right|\\
	&\leq \sum_{r=1}^R\max_{[0,1]\times M}\left|\Lambda_t\b{\p}_t\left(Y_r(\sigma'(t))^*\Lambda_t\b{\p}_t(X_r(\sigma'(t))^*\rho)\right)\rho^{-1}\right|\cdot\|s(t)\|^2_{\rho}.
\end{split}	
\end{align}
For the last term in the RHS of (\ref{0.7}), we have
\begin{align}\label{1.9}
\left|\int_M\rho\cdot\left(\frac{\p F(\sigma(t))}{\p t}s(t),s(t)\right)\frac{\omega^m}{m!}\right|	\leq \max_{[0,1]\times M}\left|\frac{\p F(\sigma(t))}{\p t}\right|\cdot\|s(t)\|^2_{\rho}.
\end{align}

Substituting (\ref{1.7}), (\ref{1.8}) and (\ref{1.9}) into (\ref{0.7}), we obtain
\begin{align*}
\begin{split}
	&\quad \left|\langle u(\sigma'(t))s(t),s(t)\rangle_{\rho}\right|\\
	&\leq \frac{1}{2(k+n)}\max_{[0,1]\times M}|\Lambda_t\b{\p}_t(Z(\sigma'(t))^*\rho)\rho^{-1}|\cdot \|s(t)\|^2_{\rho}\\
	&\quad + \frac{1}{2(k+n)}\sum_{r=1}^R\max_{[0,1]\times M}\left|\Lambda_t\b{\p}_t\left(Y_r(\sigma'(t))^*\Lambda_t\b{\p}_t(X_r(\sigma'(t))^*\rho)\right)\rho^{-1}\right|\cdot \|s(t)\|^2_{\rho}\\
	&\quad+\frac{k}{2(k+n)}\max_{[0,1]\times M}\left|\frac{\p F(\sigma(t))}{\p t}\right|\cdot\|s(t)\|^2_{\rho}\\
	&=\frac{C_{\rho}+kC}{2(k+n)}\|s(t)\|^2_{\rho},
	\end{split}
\end{align*}
completing the proof.
\end{proof}

\begin{prop}\label{lemma2}
We have the following estimate for the norm
of the parallel transport 
$P_{\phi(\sigma),\sigma}$,
	\begin{align}\label{1.1}
	e^{-\frac{C_{\rho}+kC}{k+n}}\|s\|^2_{\rho\circ\phi^{-1}}\leq 
\|P_{\phi(\sigma),\sigma}
\phi^*(s)\|^2_{\rho}\leq e^{\frac{C_{\rho}+kC}{k+n}}\|s\|^2_{\rho\circ\phi^{-1}},
\end{align}
for all $s\in H^0(M_\sigma, \mathcal L_\sigma^k)$.
\end{prop}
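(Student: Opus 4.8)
The plan is to derive the two-sided estimate (\ref{1.1}) by differentiating the $\rho$-norm of $s(t)$ along the curve $\sigma(t)$ and applying the bound from Lemma \ref{lemma1}. First I would set $f(t)=\|s(t)\|^2_\rho$ where $s(t)=P_{\phi(\sigma),\sigma(t)}\circ\phi^*(s)$ as in (\ref{0.5}), and compute $\frac{d}{dt}f(t)$. Since $s(t)$ is parallel with respect to $\hat{\n}$, we have $\frac{\p}{\p t}s(t)=\hat{\n}^t_{\sigma'(t)}s(t)=u(\sigma'(t))s(t)$ by (\ref{0.1}). Because the Hermitian structure $\langle\cdot,\cdot\rangle_\rho$ in (\ref{2.0}) is a fixed (time-independent) pairing on $C^\infty(M,\mc L^k)$ — the metric on $\mc L^k$, the function $\rho$, and the symplectic volume $\omega^m/m!$ do not depend on $t$ — the derivative is simply
\begin{align*}
\frac{d}{dt}\|s(t)\|^2_\rho=\langle u(\sigma'(t))s(t),s(t)\rangle_\rho+\langle s(t),u(\sigma'(t))s(t)\rangle_\rho=2\,\Re\langle u(\sigma'(t))s(t),s(t)\rangle_\rho.
\end{align*}
Hence $\left|\frac{d}{dt}\log f(t)\right|\le \frac{|2\Re\langle u(\sigma'(t))s(t),s(t)\rangle_\rho|}{\|s(t)\|^2_\rho}\le \frac{C_\rho+kC}{k+n}$ by Lemma \ref{lemma1}.

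Next I would integrate this differential inequality over $t\in[0,1]$. This gives
\begin{align*}
e^{-\frac{C_\rho+kC}{k+n}}\le \frac{\|s(1)\|^2_\rho}{\|s(0)\|^2_\rho}\le e^{\frac{C_\rho+kC}{k+n}}.
\end{align*}
Now $s(1)=P_{\phi(\sigma),\sigma}\circ\phi^*(s)$ since $\sigma(1)=\sigma$, and $s(0)=P_{\phi(\sigma),\sigma(0)}\circ\phi^*(s)=\phi^*(s)$ since $\sigma(0)=\phi(\sigma)$ and the parallel transport from $\phi(\sigma)$ to itself is the identity. So it remains to identify $\|s(0)\|^2_\rho=\|\phi^*(s)\|^2_\rho$ with $\|s\|^2_{\rho\circ\phi^{-1}}$. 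This is the change-of-variables step: $\phi$ acts as a symplectomorphism of $(M,\omega)$ lifting to an isometry of $\mc L^k$, so pulling back by $\phi$ and then substituting $x\mapsto\phi^{-1}(x)$ in the integral moves the weight $\rho(x)$ to $\rho(\phi^{-1}(x))=(\rho\circ\phi^{-1})(x)$ while $(\phi^*s_1,\phi^*s_2)(x)=(s_1,s_2)(\phi(x))$ and $\phi^*(\omega^m/m!)=\omega^m/m!$. Thus $\|\phi^*(s)\|^2_\rho=\int_M\rho\cdot(\phi^*s,\phi^*s)\frac{\omega^m}{m!}=\int_M(\rho\circ\phi^{-1})\cdot(s,s)\frac{\omega^m}{m!}=\|s\|^2_{\rho\circ\phi^{-1}}$, and substituting into the integrated inequality yields exactly (\ref{1.1}).

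I expect the main obstacle to be purely bookkeeping rather than conceptual: one must be careful that $\phi^*$ as used in (\ref{0.5}) — the induced action on the total space of the Verlinde bundle — really does act as the pullback isometry of $\mc L^k$ covering the symplectomorphism of $M$, so that the change-of-variables identity holds with no extra Jacobian or unitary phase factor. Once that normalization is fixed (it follows from the construction of $\mc L$ and $\n$ being $\Gamma$-equivariant), the rest is the one-line computation of $\frac{d}{dt}\|s(t)\|^2_\rho$ using the $t$-independence of the pairing $\langle\cdot,\cdot\rangle_\rho$ together with Lemma \ref{lemma1}, followed by Gr\"onwall-type integration. A minor point worth stating explicitly is that $\|s(t)\|_\rho>0$ for all $t$ — which holds since $\phi^*(s)\ne 0$ and parallel transport is invertible — so that $\log f(t)$ is well defined on $[0,1]$.
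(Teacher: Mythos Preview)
Your proposal is correct and follows essentially the same route as the paper: differentiate $\|s(t)\|_\rho^2$ using $\hat\n_{\sigma'(t)}s(t)=0$ to obtain $2\Re\langle u(\sigma'(t))s(t),s(t)\rangle_\rho$, apply Lemma \ref{lemma1}, integrate the resulting Gr\"onwall-type inequality, and identify $\|s(0)\|_\rho^2=\|\phi^*s\|_\rho^2=\|s\|_{\rho\circ\phi^{-1}}^2$ via the symplectomorphism change of variables. Your additional remarks on the $\Gamma$-equivariance of $\mc L$ and the nonvanishing of $\|s(t)\|_\rho$ are valid bookkeeping points that the paper leaves implicit.
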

\begin{proof}
Using    the definition of $s(t)$ in (\ref{0.5}) we have
\begin{align}\label{0.4}
\hat{\n}_{\sigma'(t)}s(t)=0.	
\end{align}
By (\ref{0.1}) and (\ref{0.4})  we deduce that  
\begin{align*}
\begin{split}
	\frac{d}{dt}\|s(t)\|_{\rho}^2
	&=\langle \hat{\n}^t_{\sigma'(t)}s(t),s(t)\rangle_{\rho}+\langle s(t),\hat{\n}^t_{\sigma'(t)}s(t)\rangle_{\rho}\\
	&=\int_M(\rho u(\sigma'(t))s(t),s(t))+( s(t),\rho u(\sigma'(t))s(t))\frac{\omega^m}{m!}\\
	&=2\text{Re}\langle u(\sigma'(t))s(t),s(t)\rangle_{\rho}.
\end{split}	
\end{align*}
This is treated in  Lemma \ref{lemma1} and we find
\begin{align*}
-\frac{C_{\rho}+kC}{k+n}\|s(t)\|^2_{\rho}\leq \frac{d}{dt}\|s(t)\|^2_{\rho}\leq \frac{C_{\rho}+kC}{k+n}\|s(t)\|^2_{\rho}.
\end{align*}
Hence
\begin{align}\label{0.11}
	e^{-\frac{C_{\rho}+kC}{k+n}}\|s(0)\|^2_{\rho}\leq \|s(1)\|^2_{\rho}\leq e^{\frac{C_{\rho}+kC}{k+n}}\|s(0)\|^2_{\rho}.
\end{align}

Now $\sigma(t)$ is a curve from $\phi(\sigma)$ to $\sigma$, $P_{\phi(\sigma),\sigma(0)}=
P_{\phi(\sigma),\phi(\sigma)}=
\text{Id}$, $\sigma(1)=\sigma$,  and
\begin{align}\label{0.9}
s(0)=\phi^*(s),\quad s(1)=P_{\phi(\sigma),\sigma}\phi^*(s).
\end{align}
The norm of $s(0)$ is given by
\begin{align}\label{0.10}
\begin{split}
\|s(0)\|^2_{\rho}&=\|\phi^*s\|^2_{\rho}=\int_M \rho|\phi^*s|^2\frac{\omega^m}{m!}=\int_M \rho|s\circ\phi|^2\frac{\omega^m}{m!}\\
&=\int_M \rho\circ \phi^{-1}|s|^2\frac{\omega^m}{m!}=\|s\|^2_{\rho\circ\phi^{-1}}.	
\end{split}
\end{align}
Here we have used the fact that
 $\phi$ induces a symplectomorphism of $M$, i.e. $\phi^*\omega=\omega$.

 Combining  (\ref{0.9}) and (\ref{0.11})  we find
 the estimate
\begin{align*}
		e^{-\frac{C_{\rho}+kC}{k+n}}\|s\|^2_{\rho\circ\phi^{-1}}\leq \|P_{\phi(\sigma),\sigma}\phi^*(s)\|^2_{\rho}\leq e^{\frac{C_{\rho}+kC}{k+n}}\|s\|^2_{\rho\circ\phi^{-1}}.
\end{align*}
\end{proof}

\vspace{5mm}

We prove now Theorem \ref{main theorem}.

\vspace{3mm}
{\it The proof of Theorem \ref{main theorem}:} 
We consider first the case of $g\geq 3$, $n$ and $d$ are coprime.
Suppose
 $\phi\in \bigcap_{k=1}^{\infty}\text{Ker}\, \pi_k$. We prove that $\phi$
is the identity mapping of $\Gamma$.

The projective representation of the mapping class group  $\Gamma$
is defined via the  flat connection, in particular
 $\Gamma$ acts on the space
of covariant constant sections over Teichm\"uller space, and
\begin{align}\label{0.8}
  P_{\phi(\sigma),\sigma}\circ\phi^*=\pi_k(\phi)=c_k\text{Id},
 \quad c_k\neq 0,
\end{align}
when acting on the element of $H^0(M_{\sigma},\mc{L}^k_{\sigma})$.

By taking $\rho=1$ and using Proposition \ref{lemma2}, we get
\begin{align}\label{1.3}
	e^{-\frac{C_{1}+kC}{k+n}}\leq c_k^2\leq e^{\frac{C_{1}+kC}{k+n}}.
\end{align}
 
We prove first $\phi$ acts on $M$ as identity.
Otherwise suppose $\phi\neq \text{Id}$ as mappings of $M$.
 Then there exists  a point $p\in M$ such that 
  $p\neq \phi^{-1}(p)$.
 Let $V_p, U_p\subset M$ be two small neighborhoods of $p$ with
\begin{align}\label{1.4}
p\in V_p\Subset U_p,\quad \phi^{-1}(V_p)\subset M-U_p.	
\end{align}
Let $\rho: M\to (0,1]$ be a smooth  function on $M$ satisfying 
\begin{align}\label{1.2}
\rho(x)=
\begin{cases}
	&1,\quad x\in V_p,\\
	&\frac{1}{e^{2C}+1},\quad x\in M-U_p.
\end{cases}	
\end{align}

 For each large $k$ we take the initial section $s$ to be the peak section $s^k_p$ of the point $p$. By (\ref{0.8}), (\ref{1.3}), (\ref{1.2}) and (\ref{0.3}), we find
\begin{align}\label{1.5}
\begin{split}
\|P_{\phi(\sigma),\sigma}\circ\phi^*(s^k_p)\|^2_{\rho}&=c_{k}^2\int_M \rho|s_p^k|^2\frac{\omega^m}{m!}\\
&\geq e^{-\frac{C_{1}+kC}{k+n}}\int_{V_p}|s_p^k|^2\frac{\omega^m}{m!}\\
&\geq e^{-\frac{C_{1}+kC}{k+n}}(1-o(1)).
\end{split}
\end{align}
On the other hand, by (\ref{1.4}), (\ref{1.2}) and (\ref{0.3}), we have also
\begin{align}\label{1.6}
\begin{split}
	\|s^k_p\|^2_{\rho\circ{\phi^{-1}}}&=\int_M \rho\circ{\phi^{-1}}|s^k_p|^2\frac{\omega^m}{m!}\\
	&=\int_{V_p} \rho\circ{\phi^{-1}}|s^k_p|^2\frac{\omega^m}{m!}+\int_{M-V_p} \rho\circ{\phi^{-1}}|s^k_p|^2\frac{\omega^m}{m!}\\
	&\leq\frac{1}{e^{2C}+1}\int_{V_p} |s^k_p|^2\frac{\omega^m}{m!}+\int_{M-V_p} |s^k_p|^2\frac{\omega^m}{m!}\\
	&\leq \frac{1}{e^{2C}+1}+o(1).
	\end{split}
\end{align}
Substituting (\ref{1.5}) and (\ref{1.6}) into (\ref{1.1}) we obtain
 \begin{align*}
	e^{-\frac{C_{1}+kC}{k+n}}(1-o(1))\leq e^{\frac{C_{\rho}+kC}{k+n}}\left(\frac{1}{e^{2C}+1}+o(1)\right).
\end{align*}
 
As $k\to \infty$ it gives
\begin{align*}
	e^{-C}\leq e^{C}\cdot\frac{1}{e^{2C}+1}=\frac{e^{-C}}{1+e^{-2C}}<e^{-C},
\end{align*}
which is a contradiction. So
$\phi$ acts on $M$ as the identity. It follows then from
the standard argument \cite{Andersen} that $\phi$ itself is the identity
element in $\Gamma$ (as equivalence class of mappings of $\Sigma$).

Now in the case  $g=2$, $(n, d)=(2, 0)$, the same
proof above concludes that if $\phi\in
\bigcap_{k=1}^{\infty}\text{Ker} (\pi^{2,0}_k)$ then 
it acts trivially on $M$. It
is then either the identity
or the hyper-elliptic involution
$H$; see \cite{Andersen}. On the other hand  $H$ indeed acts
trivially under 
all $\pi^{2,0}_k$ by its definition.
Thus $\bigcap_{k=1}^{\infty}\text{Ker} (\pi^{2,0}_k)=\{1,H\}$.

\rightline{$\Box$}

\end{document}